\newcommand\A{\mathrm{A}} \newcommand\Aut{\mathrm{Aut}}
\newcommand\D{\mathrm{D}}\newcommand\DGP{\mathrm{DGP}}\newcommand\DP{\mathrm{DP}}
   \newcommand\GL{\mathrm{GL}}  \newcommand\GP{\mathrm{GP}}
       \newcommand\Sy{\mathrm{S}}
\newcommand\ZZ{\mathbb{Z}}
\newtheorem{theorem}{Theorem}[section]
\newtheorem{lemma}[theorem]{Lemma}
\newtheorem{proposition}[theorem]{Proposition}
\newtheorem{corollary}[theorem]{Corollary}
\newtheorem{conjecture}[theorem]{Conjecture}
\theoremstyle{definition}
\definecolor{Blue}{rgb}{0,0,1}
\definecolor{Red}{rgb}{1,0,0}
\definecolor{DarkGreen}{rgb}{0,0.6,0}
\definecolor{DarkYellow}{rgb}{1,1,0.2}
\definecolor{DarkPurple}{rgb}{.6,0,1}
\def\non{\nonumber}
\begin{document}

\title[Canonical double covers of generalized Petersen graphs]{Canonical double covers of generalized Petersen graphs, and double generalized Petersen graphs}

\author[Qin]{Yan-Li Qin}
\address{School of Statistics\\Capital University of Economics and Business\\Beijing, 100070\\ P. R. China}
\email{yanliqin@bjtu.edu.cn}

\author[Xia]{Binzhou Xia}
\address{School of Mathematics and Statistics\\The University of Melbourne\\Parkville, VIC 3010\\Australia}
\email{binzhoux@unimelb.edu.au}

\author[Zhou]{Sanming Zhou}
\address{School of Mathematics and Statistics\\The University of Melbourne\\Parkville, VIC 3010\\Australia}
\email{sanming@unimelb.edu.au}

\maketitle

\begin{abstract}
The canonical double cover $\D(\Gamma)$ of a graph $\Gamma$ is the direct product of $\Gamma$ and $K_2$. If $\Aut(\D(\Gamma))\cong\Aut(\Gamma)\times\ZZ_2$ then $\Gamma$ is called stable; otherwise $\Gamma$ is called unstable. An unstable graph is said to be nontrivially unstable if it is connected, non-bipartite and no two vertices have the same neighborhood. In 2008 Wilson conjectured that, if the generalized Petersen graph $\GP(n,k)$ is nontrivially unstable, then both $n$ and $k$ are even, and either $n/2$ is odd and $k^2\equiv\pm 1 \pmod{n/2}$, or $n=4k$. In this note we prove that this conjecture is true. At the same time we determine all possible isomorphisms among the generalized Petersen graphs, the canonical double covers of the generalized Petersen graphs, and the double generalized Petersen graphs. Based on these we completely determine the full automorphism group of the canonical double cover of $\GP(n,k)$ for any pair of integers $n, k$ with $1 \leqslant k < n/2$.


\textit{Key words:} canonical double cover; stable graph; generalized Petersen graph; double generalized Petersen graph

\textit{Mathematics Subject Classifications:} 05C25,20B25
\end{abstract}

\section{Introduction}

All graphs considered in the note are finite, simple and undirected. As usual, for a graph $\Gamma$ we use $V(\Gamma)$, $E(\Gamma)$ and $\Aut(\Gamma)$ to denote its vertex set, edge set and automorphism group, respectively. For a positive integer $n$, denote by $\ZZ_n$, $\D_{2n}$, $\A_n$ and $\Sy_n$ the cyclic group of order $n$, the dihedral group of order $2n$, the alternating group of degree $n$ and the symmetric group of degree $n$, respectively.

The \emph{canonical double cover} of a graph $\Gamma$ (see, for example,~\cite{{LMS2015}}), denoted by $\D(\Gamma)$, is defined to be the direct product of $\Gamma$ and $K_2$, where $K_2$ is the complete graph of order $2$. That is, $\D(\Gamma)$ is the graph with vertex set $V(\Gamma) \times \ZZ_2$ in which $(u, x)$ and $(v, y)$ are adjacent if and only if $u$ and $v$ are adjacent in $\Gamma$ and $x \ne y$. In the literature $\D(\Gamma)$ is also called \cite{KP2018} the Kronecker cover of $\Gamma$. It can be verified that $\D(\Gamma)$ is connected if and only if $\Gamma$ is connected and non-bipartite (see, for example, \cite[Theorem~3.4]{BHM1980}). Clearly,
\[
\Aut(\D(\Gamma))\gtrsim \Aut(\Gamma)\times\Aut(K_2)\cong\Aut(\Gamma)\times\ZZ_2,
\]
where $X \gtrsim Y$ means that $X$ contains a subgroup that is isomorphic to $Y$. If $\Aut(\D(\Gamma))\cong\Aut(\Gamma)\times\ZZ_2$, then $\Gamma$ is called \emph{stable}; otherwise, $\Gamma$ is called \emph{unstable}. It can be easily verified (see, for example,~\cite[Proposition~4.1]{MSZ1989}) that a graph is unstable if it is disconnected, or bipartite with nontrivial automorphism group, or contains two distinct vertices with the same neighborhood. In light of this observation, we call an unstable graph \emph{nontrivially unstable} if it is connected, non-bipartite and vertex-determining, and \emph{trivially unstable} otherwise, where a graph is said to be \emph{vertex-determining} if no two vertices have the same neighborhood in the graph.

The stability of graphs was first studied in~\cite{MSZ1989} by Maru\v{s}i\v{c}, Scapellato and Zagaglia Salvi using the language of symmetric $(0,1)$ matrices. Since then this concept has been studied extensively by several authors from different viewpoints~\cite{LMS2015,MSZ1992,NS1996,Surowski2001,Surowski2003,Wilson2008}. In \cite{NS1996}, the stability of graphs played an important role in finding regular embeddings of canonical double covers on orientable surfaces. In \cite{LMS2015}, close connections between the stability and two-fold automorphisms of graphs were found. In \cite{MSZ1992}, searching for nontrivially unstable graphs led to the introduction of generalized Cayley graphs, and it was proved among others that every generalized Cayley graph which is not a Cayley graph is unstable. In \cite{Surowski2001}, methods for constructing arc-transitive unstable graphs were given, and three infinite families of such graphs were constructed as applications. Stability of circulant graphs was studied in \cite{Wilson2008} by Wilson and in \cite{QXZ} by the authors of the present paper, where in the latter paper an open question in \cite{Wilson2008} about the stability of arc-transitive circulant graphs was answered and an infinite family of counterexamples to a conjecture of  Maru\v{s}i\v{c}, Scapellato and Zagaglia Salvi \cite{MSZ1989} was constructed.

Apart from circulant graphs, Wilson \cite{Wilson2008} also studied the stability of a few other interesting families of graphs, notably generalized Petersen graphs. Given integers $n$ and $k$ with $1 \leqslant k < n/2$, the \emph{generalized Petersen graph} $\GP(n,k)$ is the cubic graph with $2n$ vertices, say, $u_0, \ldots, u_{n-1}, v_0,  \ldots, v_{n-1}$, and edges $\{u_i, u_{i+1}\}$, $\{u_i, v_i\}$, $\{v_i, v_{i+k}\}$, for $i \in \{0, \ldots, n-1\}$, with subscripts modulo $n$. It is readily seen that $\GP(5,2)$ is the well-known Petersen graph. It is also easy to see that $\GP(n,k)$ is connected and vertex-determining. Beginning with  \cite{Watkins1969}, generalized Petersen graphs have been studied widely in many different contexts. In particular, in \cite[Theorems~P.1--P.2]{Wilson2008}, Wilson proved that  $\GP(n, k)$ is unstable provided that $(n,k)$ satisfies one of the following conditions:
\begin{enumerate}[{\rm(P.1)}]
\item $n=2m$, where $m\geqslant 3$ is odd, and $k$ is even such that $k^2\equiv\pm1\pmod{m}$;
\item $n=4k$ and $k$ is even.
\end{enumerate}
In \cite[p.377]{Wilson2008}, Wilson conjectured that the converse of this statement is also true:
\begin{conjecture}\label{conj}
Every nontrivially unstable generalized Petersen graph $\GP(n,k)$ satisfies \emph{(P.1)} or \emph{(P.2)}.
\end{conjecture}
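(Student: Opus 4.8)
The plan is to prove the conjecture by determining the full automorphism group $\Aut(\D(\GP(n,k)))$ for every admissible pair $(n,k)$ and comparing it with $\Aut(\GP(n,k))\times\ZZ_2$; instability is precisely the strict containment of the latter in the former. Since $\GP(n,k)$ is always connected and vertex-determining, a nontrivially unstable $\GP(n,k)$ must be non-bipartite, and one checks by $2$-colouring the outer cycle that $\GP(n,k)$ is bipartite exactly when $n$ is even and $k$ is odd. Hence I may assume that $\GP(n,k)$ is non-bipartite, so that $\D:=\D(\GP(n,k))$ is connected, and that $\D$ is unstable; the goal is to force (P.1) or (P.2). In particular, because both (P.1) and (P.2) require $n$ and $k$ even, the first milestones are to show that no nontrivially unstable $\GP(n,k)$ has $n$ odd, and that among those with $n,k$ even the instability occurs only under (P.1) or (P.2).

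Next I would set up the two-fold automorphism framework. Because $\D$ is connected it has a unique bipartition into the two fibres $V(\GP(n,k))\times\{0\}$ and $V(\GP(n,k))\times\{1\}$, so every automorphism of $\D$ either preserves or interchanges the fibres, and since the swap $(w,x)\mapsto(w,x+1)$ always interchanges them, the fibre-preserving automorphisms form a subgroup of index exactly $2$. A fibre-preserving automorphism is precisely a pair $(\sigma,\pi)$ of permutations of $V(\GP(n,k))$ satisfying $u\sim v\iff\sigma u\sim\pi v$, equivalently $\pi(N(u))=N(\sigma u)$ for every $u$. A diagonal pair $\sigma=\pi$ forces $\sigma\in\Aut(\GP(n,k))$, and one sees that $\D$ is unstable if and only if some such pair is non-diagonal, i.e. $\sigma\neq\pi$. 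Thus the problem reduces to classifying these pairs and, in particular, to pinning down exactly when a non-diagonal one exists.

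I would then exploit the cubic structure of $\GP(n,k)$ to control a fibre-preserving pair $(\sigma,\pi)$ combinatorially. The three edge-types, namely outer edges $\{u_i,u_{i+1}\}$, spokes $\{u_i,v_i\}$ and inner edges $\{v_i,v_{i+k}\}$, should be separable by a local invariant such as the multiset of lengths of short cycles through an edge (outer edges lie on the outer $n$-cycle, inner edges on the inner cycles of length $n/\gcd(n,k)$, spokes on neither). For all but finitely many exceptional pairs $(n,k)$ this forces $\sigma$ and $\pi$ to respect the outer/inner partition, up to interchanging the two blocks, after which the induced action on indices must be an affine map $i\mapsto\pm i+c$ or an inner/outer interchange governed by multiplication by $k$. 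Writing these neighbourhood conditions out, a non-diagonal pair can arise in two ways: an inner/outer interchange that, because the two fibres may be chosen independently, survives modulo $m=n/2$ rather than modulo $n$, giving $k^2\equiv\pm1\pmod m$ with the required parity constraints, which is (P.1); or the degenerate family where the inner cycles have length $4$, giving $n=4k$ and, by the same parity bookkeeping, $k$ even, which is (P.2). This viewpoint also explains why $n$ odd yields stability: with no index-$2$ subgroup of $\ZZ_n$ available, the modulo-$m$ relaxation that produces the non-diagonal pairs cannot occur, and the evenness of both $n$ and $k$ emerges from matching the parity of the index to the fibre.

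The main obstacle is the heart of the third step: proving that a non-diagonal fibre-preserving pair exists only in the claimed congruence cases. This requires translating the conditions $\pi(N(u))=N(\sigma u)$ faithfully into arithmetic on $\ZZ_n$ and tracking how the freedom of treating the two fibres separately relaxes the classical requirement $k^2\equiv\pm1\pmod n$ (for an honest inner/outer-swapping automorphism of $\GP(n,k)$) to $k^2\equiv\pm1\pmod{n/2}$, while the various divisibility-by-$2$ conditions must be extracted with care. A subsidiary obstacle is the finite list of highly symmetric exceptions, namely the edge-transitive generalized Petersen graphs of Frucht--Graver--Watkins together with a handful of small cases where the cycle-length invariant fails to separate the edge-types; these I would dispatch by direct computation in \textsc{Magma}, checking in each case whether $\Aut(\D)$ strictly exceeds $\Aut(\GP(n,k))\times\ZZ_2$ and confirming agreement with (P.1)--(P.2). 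I also expect $\D(\GP(n,k))$ to be closely related to a double generalized Petersen graph, whose automorphism group may already be available in the literature and could shortcut the edge-classification step.
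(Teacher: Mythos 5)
Your proposal follows essentially the same route as the paper's proof: both reduce the conjecture to determining $\Aut(\D(\GP(n,k)))$ via the outer-edge/spoke/inner-edge partition, both identify the same two sources of extra automorphisms (an inner/outer interchange that only needs $k^2\equiv\pm1\pmod{n/2}$, and the inner--$4$-cycle family with $n=4k$), and both dispatch the finitely many highly symmetric exceptions by cycle-type invariants plus \textsc{Magma} computation. The only differences are cosmetic: you phrase the analysis in the two-fold (fibre-preserving pair) language where the paper works directly with the subgroups $B(n,k)$ and $C(n,k)$ of $\Aut(\DGP(n,k))$ stabilizing the edge classes, and the paper settles the odd-$n$ case more cleanly than your parity heuristic, via the explicit isomorphisms $\DGP(n,k)\cong\GP(2n,k)$ or $\GP(2n,n-k)$ combined with the Frucht--Graver--Watkins theorem.
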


In this note we prove this conjecture through determining the automorphism groups of the canonical double covers of $\GP(n,k)$ for any integers $n, k$ with $1 \leqslant k < n/2$. Let
\[
\DGP(n,k) = \D(\GP(n,k))
\]
be the canonical double cover of $\GP(n,k)$ and
\[
A(n,k) = \Aut(\DGP(n,k))
\]
the full automorphism group of $\DGP(n,k)$. Obviously, $\DGP(n,k)$ is a cubic graph of order $4n$. To state our main result, we need to recall the following groups introduced in \cite{KP2016}.
\begin{equation}
\label{fn}
F(n) = \langle \rho,\delta \mid \rho^n=\delta^2=1, \delta\rho=\rho^{-1}\delta\rangle,
\end{equation}
\begin{equation}
\label{hnk}
H(n, k) = \langle \rho,\alpha \mid \rho^n=\alpha^4=1, \alpha\rho=\rho^{k}\alpha^{-1}\rangle,
\end{equation}
\begin{equation}
\label{jnk}
J(n,k) = \langle \rho,\delta,\alpha \mid \rho^n=\delta^2=\alpha^2=1, \delta\rho=\rho^{-1}\delta, \alpha\rho=\rho^k\alpha, \alpha\delta = \delta\alpha\rangle,
\end{equation}
\begin{align}
\label{knk}
K(n,k) = & \langle \rho, \delta, \beta \mid \rho^{n}=\delta^2=\beta^2=1, \delta\rho= \rho^{-1}\delta, \beta\rho=\rho\beta, \delta\beta=\beta\delta\rangle,
\end{align}
\begin{align}
\label{lnk}
L(n,k) = & \langle \rho, \delta, \beta, \psi \mid \rho^{n}=\delta^2=\beta^2=\psi^2=1, \delta\rho= \rho^{-1}\delta, \beta\rho=\rho\beta, \delta\beta=\beta\delta, \\ \non
          & \psi\rho=\rho^k\beta\psi, \psi\delta=\delta\psi, \psi\beta=\rho^\frac{n}{2}\psi \rangle,
\end{align}
\begin{align}
\label{mnk}
M(n,k) = & \langle \rho, \delta, \beta, \psi \mid \rho^{n}=\delta^2=\beta^2=\psi^4=1, \delta\rho= \rho^{-1}\delta, \beta\rho=\rho\beta, \delta\beta=\beta\delta, \\ \non
  & \psi\rho=\rho^k\beta\psi, \psi\delta=\delta\psi, \psi\beta=\rho^\frac{n}{2}\psi, \psi^2=\delta\rangle,
\end{align}
\begin{align}
\label{nnk}
N(n,k) = & \langle \rho, \delta, \beta, \eta \mid \rho^{n}=\delta^2=\beta^2=\eta^2=1, \delta\rho= \rho^{-1}\delta, \beta\rho=\rho\beta, \delta\beta=\beta\delta\rangle, \\ \non
 & \eta\rho=\rho\eta, \eta\delta=\delta\eta, \eta\beta=\rho^{\frac{n}{2}}\beta\eta\rangle.
\end{align}
Note that $F(n) \cong \D_{2n}$, $H(n, k) \cong \ZZ_{n}\rtimes\ZZ_4$, $J(n,k)  \cong \D_{2n}\rtimes\ZZ_2$ and $K(n,k) \cong \D_{2n}\times\ZZ_2$. Note also that $L(n,k)$, $M(n,k)$ and $N(n,k)$ are all semidirect products of $\D_{2n}\times\ZZ_2$ by $\ZZ_2$, but they are not necessarily isomorphic to each other.

The main result in this note is as follows.

\begin{theorem}
\label{A(n,k)}
Let $n$ and $k$ be integers with $1 \leqslant k < n/2$.

\begin{enumerate}[\rm (i)]
\item If both $n$ and $k$ are odd, then the following hold:
\begin{enumerate}
\item [\rm (i.1)]
if $k^2\not\equiv\pm1\pmod{n}$, then $A(n,k) = F(2n)$;
\item [\rm (i.2)]
if $k^2\equiv1\pmod{n}$, then $A(n,k) = J(2n, k)$;
\item [\rm (i.3)]
if $k^2\equiv-1\pmod{n}$, then $A(n,k) = H(2n, k)$.
\end{enumerate}

\item If $n$ is odd and $k$ is even, but $(n,k) \ne (5, 2)$, then the following hold:
\begin{enumerate}
\item [\rm (ii.1)]
if $k^2\not\equiv\pm1\pmod{n}$, then $A(n,k) = F(2n)$;
\item [\rm (ii.2)]
if $k^2\equiv1\pmod{n}$, then $A(n,k) = J(2n, n-k)$;
\item [\rm (ii.3)]
if $k^2\equiv-1\pmod{n}$, then $A(n,k) = H(2n, n-k)$.
\end{enumerate}
In addition,
\begin{enumerate}
\item [\rm (ii.4)] $A(5,2)\cong \Sy_5\times\ZZ_2$.
\end{enumerate}

\item If $n$ is even and $k$ is odd, but $(n,k) \ne (4, 1), (8, 3), (10, 3), (12, 5), (24, 5)$, then the following hold:
\begin{enumerate}
\item [\rm (iii.1)]
if $k^2\not\equiv\pm1\pmod{n}$, then $A(n,k) = F(n) \wr \Sy_2$;
\item [\rm (iii.2)]
if $k^2\equiv1\pmod{n}$, then $A(n,k) = J(n,k) \wr \Sy_2$;
\item [\rm (iii.3)]
if $k^2\equiv-1\pmod{n}$, then $A(n,k) = H(n, k)\wr \Sy_2$.
\end{enumerate}
In addition, we have
\begin{enumerate}
\item [\rm (iii.4)] $A(4,1)\cong (\Sy_4\times \ZZ_2)\wr \Sy_2$;
\item [\rm (iii.5)] $A(8,3)\cong(\GL(2,3)\rtimes \ZZ_2)\wr \Sy_2$;
\item [\rm (iii.6)] $A(10,3)\cong (\Sy_5\times\ZZ_2)\wr \Sy_2$;
\item [\rm (iii.7)] $A(12,5)\cong (\Sy_4\times \Sy_3)\wr \Sy_2$;
\item [\rm (iii.8)] $A(24,5)\cong((\GL(2,3)\times\ZZ_3)\rtimes \ZZ_2)\wr \Sy_2$.
\end{enumerate}

\item If both $n$ and $k$ are even, but $(n, k) \ne (10,2)$, then the following hold:
\begin{enumerate}
\item [\rm (iv.1)] if $k^2 \equiv 1 \pmod{n/2}$, then $A(n,k) = L(n,k)$;
\item [\rm (iv.2)] if $k^2 \equiv -1 \pmod{n/2}$, then $A(n,k) = M(n,k)$;
\item [\rm (iv.3)] if $n=4k$, then $A(n,k) = N(n,k)$;
\item [\rm (iv.4)] if $k^2 \not\equiv\pm1 \pmod{n/2}$ and $n\neq4k$, then $A(n,k) = K(n,k)$;
\end{enumerate}
In addition,
\begin{enumerate}
\item [\rm (iv.5)] $A(10,2)\cong (\A_5\times\ZZ_2^2)\rtimes\ZZ_2$.
\end{enumerate}
\end{enumerate}
\end{theorem}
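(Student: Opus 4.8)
The plan is to reduce every case except the one where $n$ and $k$ are both even to the classical determination of $\Aut(\GP(N,\ell))$, and to treat the remaining case by a direct analysis of the connected cubic graph $\DGP(n,k)$. Throughout I use two elementary facts: $\GP(n,k)$ is bipartite exactly when $n$ is even and $k$ is odd, so that $\DGP(n,k)$ is connected in cases (i), (ii), (iv) and disconnected in case (iii); and $\DGP(n,k)$, being a direct product with $K_2$, is always bipartite with the two sheets $V(\GP(n,k))\times\{0\}$ and $V(\GP(n,k))\times\{1\}$ as its parts.

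\textbf{The reducible cases.} If $n$ is even and $k$ is odd, then $\GP(n,k)$ is bipartite, so $\D(\GP(n,k))$ is a disjoint union of two copies of $\GP(n,k)$ and $A(n,k)=\Aut(\GP(n,k))\wr\Sy_2$; substituting the known value of $\Aut(\GP(n,k))$ yields (iii.1)--(iii.3), while the exceptional pairs $(4,1),(8,3),(10,3),(12,5),(24,5)$---the cube, the M\"obius--Kantor, Desargues and Nauru graphs and $\GP(24,5)$---give (iii.4)--(iii.8). If $n$ is odd, I read the vertices of $\DGP(n,k)$ off its unique outer $2n$-cycle: putting $u'_j=(u_j,j\bmod2)$ and $v'_j=(v_j,(j+1)\bmod2)$ with the first subscript reduced mod $n$, the spoke- and inner-edges become those of $\GP(2n,k)$ when $k$ is odd and of $\GP(2n,n-k)$ when $k$ is even. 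Cases (i) and (ii) then follow from the classical description of $\Aut(\GP(2n,\cdot))$, after checking by the Chinese Remainder Theorem (and $n$ odd) that $\ell^2\equiv\pm1\pmod{2n}$ is equivalent to $k^2\equiv\pm1\pmod n$ for $\ell\in\{k,\,n-k\}$; the only exceptional pair is $(5,2)$, whose cover is the Desargues graph $\GP(10,3)$, giving (ii.4).

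\textbf{The core case: both even.} Here $\DGP(n,k)$ is connected but is not a generalized Petersen graph. Reducing modulo $2$ shows $k^2\equiv\pm1\pmod n$ is impossible when $n,k$ are even, so $\Aut(\GP(n,k))=\D_{2n}$ for every such pair except the dodecahedron $\GP(10,2)$; consequently $K(n,k)=\D_{2n}\times\ZZ_2\leq A(n,k)$ always, generated by the lifted rotation $\rho$ and reflection $\delta$ together with the deck involution $\beta\colon(w,x)\mapsto(w,1-x)$. The instability is thus a phenomenon of the cover alone: gluing the two sheets with the half-rotation $\rho^{n/2}$ relaxes the classical inner--outer condition $k^2\equiv\pm1\pmod n$ to $k^2\equiv\pm1\pmod{n/2}$. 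I will write down explicit permutations $\lambda,\tau,\theta$ of $V(\DGP(n,k))$ that interchange the inner and outer vertices while shifting between sheets, verify that $\lambda$ is an automorphism precisely when $k^2\equiv1\pmod{n/2}$, that $\tau$ is when $k^2\equiv-1\pmod{n/2}$, and that $\theta$ is when $n=4k$, and check that with $\rho,\delta,\beta$ they satisfy exactly the defining relations of $L(n,k),M(n,k),N(n,k)$. This gives the lower bounds in (iv.1)--(iv.4).

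\textbf{The matching upper bound (the main obstacle).} The difficulty is to show no further automorphisms exist. Since $\DGP(n,k)$ is connected and bipartite, every automorphism preserves the sheet-bipartition up to an interchange. I plan to show, by counting the shortest cycles through each edge, that the lifts of the spokes form a perfect matching invariant under $A(n,k)$; removing it leaves the lifted outer and inner cycles, so $A(n,k)$ either preserves or interchanges the classes of outer ($u$-) and inner ($v$-) vertices, and an element interchanging them exists exactly under the arithmetic coincidences above. Fixing one vertex then fixes its unique spoke-neighbour and only permutes its two cycle-neighbours, after which propagation along the now-rigid outer/inner cycles pins the automorphism down; bounding the vertex-stabiliser this way and comparing orders with $K,L,M,N$ forces equality. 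I expect the bulk of the work to lie in establishing canonicity of the spoke-matching uniformly in $(n,k)$---the girth and the number of short cycles through an edge fluctuate for small $n$, so a finite list of small cases must be checked directly---and in the single exceptional pair $\GP(10,2)$, the dodecahedron, whose exceptional symmetry makes $A(10,2)\cong(\A_5\times\ZZ_2^2)\rtimes\ZZ_2$ in (iv.5); this last group I will determine by a separate direct computation.
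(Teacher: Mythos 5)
Your architecture is the same as the paper's: cases (i)--(ii) via the isomorphisms $\DGP(n,k)\cong\GP(2n,k)$ (for $k$ odd) and $\GP(2n,n-k)$ (for $k$ even) plus Frucht--Graver--Watkins, case (iii) via $\DGP(n,k)\cong 2\GP(n,k)$ and a wreath product, and case (iv) via the extra automorphisms $\lambda,\tau,\theta$ for the lower bound and an invariance-of-spokes argument for the upper bound; the parts you actually execute (cases (i)--(iii), the arithmetic reductions, and the lower bounds in (iv)) are correct. The problem is that the real content of the theorem --- the upper bound in case (iv) --- is announced rather than proved, and the announced method has two concrete defects. First, ``counting the shortest cycles through each edge'' is not by itself a proof that $\calS$ is $A(n,k)$-invariant. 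What is actually needed is a structural trichotomy (the paper's Lemma~\ref{setwise}): an automorphism stabilizing any one of $\calO,\calI,\calS$ setwise must stabilize $\calS$; hence if $\calS$ is not invariant, $\DGP(n,k)$ is edge-transitive, so for \emph{every} $j$ the totals of outer edges, spokes and inner edges over all $j$-cycles coincide. Refuting this requires a complete census of $8$-cycles (the paper adapts Table~1 of Frucht--Graver--Watkins) together with a parity analysis of which types can coexist, and this census is valid only for $k\geq3$: for $k=2$ there is an additional family of $8$-cycles of shape $(r,s,t)=(4,2,2)$, namely $u_i,u_{i+1},u_{i+2},u_{i+3},u_{i+4},v_{i+4},v_{i+2},v_i$, which exists for \emph{all} $n$, so the whole infinite subfamily $k=2$ needs its own uniform argument (the paper uses $10$-cycles and the classification of $5$-cycles for all $n>20$, plus machine checks for even $n\leq 20$). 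Your expectation that only ``a finite list of small cases'' must be checked directly therefore mischaracterizes where the difficulty lies.

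Second, your stabilizer/propagation step is false as stated precisely when $n=4k$. There the inner rims of $\DGP(n,k)$ are $4$-cycles, so they are not ``rigid'': the automorphism $\theta$ fixes $(u_0,0)$, both of its outer neighbours and its spoke neighbour (indeed it fixes one entire outer rim pointwise) and yet is nontrivial, so ``fixing one vertex \dots\ pins the automorphism down'' fails, and the vertex stabilizer has order $4$, not $2$. Relatedly, $\theta$ does \emph{not} interchange the inner and outer vertex classes (it lies in the paper's $C(n,k)$, stabilizing $\calO$, $\calI$ and $\calS$ individually), so your dichotomy ``$A(n,k)$ preserves or interchanges the $u$- and $v$-classes, and an interchanging element exists exactly under the arithmetic coincidences'' mis-describes the $n=4k$ instability; an order count based on it would give $|A(4k,k)|=4n$ instead of the correct $8n=|N(n,k)|$. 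These are exactly the points where the paper's Proposition~\ref{C(n,k)} (the special $n=4k$ analysis of $C(n,k)$) and Lemma~\ref{noteq} (the four-case analysis of images of rims, two cases eliminated arithmetically) do work that your sketch passes over, so as it stands the proposal does not yet constitute a proof of part (iv).
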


The following corollary of Theorem \ref{A(n,k)} settles Conjecture \ref{conj} affirmatively.

\begin{corollary}
\label{thm:stab}
Let $n$ and $k$ be integers with $1 \leqslant k < n/2$.
\begin{enumerate}[\rm (i)]
\item If $n$ is odd, then $\GP(n,k)$ is stable.
\item $\GP(n,k)$ is trivially unstable if and only if $n$ is even and $k$ is odd.
\item If both $n$ and $k$ are even, then $\GP(n,k)$ is nontrivially unstable if and only if one of the following holds:
\begin{enumerate}
\item[\rm (iii.1)] $k^2\equiv\pm 1 \pmod{n/2}$;
\item[\rm (iii.2)] $n=4k$.
\end{enumerate}
\end{enumerate}
\end{corollary}


We would like to emphasize that Theorem~\ref{A(n,k)} contains more information than needed to prove Corollary~\ref{thm:stab}. In general, it is challenging to determine the full automorphism group of a graph. An early success in this line of research is the determination of the automorphism group of $\GP(n,k)$ achieved by Frucht, Graver and Watkins in~\cite{FGW1971}, and Theorem~\ref{A(n,k)} gives parallel results for $\DGP(n,k)$. In a recent paper~\cite{KP2018}, Krnc and Pisanski characterized all generalized Petersen graphs which are isomorphic to the canonical double covers of some graphs, and they adverted~\cite[p.16]{KP2018} that it would be interesting to investigate the canonical double covers of generalized Petersen graphs. It is envisaged that Theorem~\ref{A(n,k)} may be useful in studying some problems for $\DGP(n,k)$, especially those involving symmetries of this graph.

In a previous version of the present paper (see \url{http://arxiv.org/abs/1807.07228v1}), we proved Theorem~\ref{A(n,k)} using similar methodologies as in \cite{FGW1971}, the most technical part being determining $A(n,k)$ when both $n$ and $k$ are even. Very recently, we found that we can give a shorter proof of Theorem~\ref{A(n,k)}, as presented in the current version, by linking $\DGP(n, t)$ to another double cover of $\GP(n, t)$ which was introduced by Zhou and Feng in \cite{ZF2012}, where $1 \leqslant t < n/2$. This double cover of $\GP(n,t)$, denoted by $\DP(n,t)$ and called a \emph{double generalized Petersen graph} \cite{ZF2012}, is defined  to have vertex set
\[
\{x_0, \ldots, x_{n-1}, y_0,  \ldots, y_{n-1},\overline{x}_0,  \ldots, \overline{x}_{n-1}, \overline{y}_0,  \ldots, \overline{y}_{n-1}\}
\]
and edge set
\[
\{\{x_i, x_{i+1}\}, \{y_i, y_{i+1}\}, \{x_i, \overline{x}_i\}, \{y_i, \overline{y}_i\}, \{\overline{x}_i, \overline{y}_{i+t}\}, \{\overline{y}_i, \overline{x}_{i+t}\} \mid i\in\{0,\ldots, n-1\}\},
\]
with subscripts modulo $n$. In~\cite{KP2016}, Kutnar and Petecki determined several permutations of $V(\DP(n,t))$ and proved that they generate the automorphism group of $\DP(n,t)$. Our shorter proof of Theorem~\ref{A(n,k)} is achieved through determining all possible isomorphisms between $\DGP(n,k)$ and $\DP(n,t)$. In fact, we can determine all possible isomorphisms among $\DGP(n,k)$, $\GP(2n,s)$ and $\DP(n,t)$ as shown in the following theorem.  

\begin{theorem}\label{3isomorphism}
Let $n$, $k$, $s$ and $t$ be integers with $1 \leqslant k< n/2$, $1\leqslant s<n$ and $1 \leqslant t< n/2$.  
\begin{enumerate}[{\rm (i)}]
\item $\DGP(n,k)\cong\GP(2n,s)$ for some integer $s$ with $1\leqslant s<n$ if and only if $n$ is odd. Moreover, if $n$ and $k$ are both odd, then $\DGP(n,k)\cong\GP(2n,k)$; if $n$ is $odd$ and $k$ is even, then $\DGP(n,k)\cong\GP(2n,n-k)$.
\item $\DGP(n,k)\cong\DP(n,t)$ for some integer $t$ with $1 \leqslant t< n/2$ if and only if $n$ and $k$ are both even. Moreover, if $n$ and $k$ are both even, then $\DGP(n,k)\cong\DP(n,k)$.
\item $\DP(n,t)\cong\GP(2n,s)$ for some integer $s$ with $1\leqslant s<n$ if and only if $n$ is odd and $\gcd(n,t)=1$. Moreover, if $n$ is odd and $\gcd(n,t)=1$, then $\DP(n,t)\cong \GP(2n,s)$, where $s$ is the unique even integer such that $1\leqslant s<n$ and $st\equiv\pm 1 \pmod{n}$.
\item It can not happen that $\DGP(n,k)\cong\GP(2n,s)\cong\DP(n,t)$ for any pair of integers $s$ and $t$ with $1\leqslant s<n$ and $1\leqslant t<n/2$. 
\end{enumerate}
\end{theorem}

After setting up notation and recalling a few known results on generalized Petersen graphs in the next section, we prove Theorem~\ref{3isomorphism} in Section~\ref{DP}. As shown in part (ii) of Theorem ~\ref{3isomorphism}, $\DGP(n,k)\cong\DP(n,k)$ with $n$ and $k$ even are the only isomorphisms between the canonical double covers of generalized Petersen graphs and double generalized Petersen graphs. Using these isomorphisms and some results in \cite{KP2016}, we prove Theorem~\ref{A(n,k)} and then Corollary~\ref{thm:stab} in Section~\ref{stability}.

\section{Preliminaries}
\label{preli}

We will use the following notation throughout the note. Let $n$ and $k$ be integers with $1 \leqslant k< n/2$. As before we label the vertices of $\GP(n,k)$ by
\[
u_0, u_1, \ldots, u_{n-1}, v_0, v_1, \ldots, v_{n-1}
\]
in such a way that the edges of $\GP(n,k)$ are given by
\[
\{u_i, u_{i+1}\},\; \{u_i, v_i\},\; \{v_i, v_{i+k}\},\; i\in\{0,1,\ldots, n-1\},
\]
with subscripts modulo $n$. Then the vertex set of $\DGP(n,k)$ is
\begin{align*}
V(\DGP(n,k))=\{ & (u_0, 0), (u_1,0), \ldots, (u_{n-1},0), (u_0,1), (u_1,1), \ldots, (u_{n-1},1),\\
& (v_0, 0), (v_1,0), \ldots, (v_{n-1},0), (v_0,1), (v_1,1), \ldots, (v_{n-1},1)\}
\end{align*}
and the edge set of $\DGP(n,k)$ consists of
\begin{equation}\label{edge2}
\{(u_i, j), (u_{i+1}, 1-j)\},\; \{(u_i, j), (v_i, 1-j)\},\; \{(v_i, j), (v_{i+k}, 1-j)\}
\end{equation}
for $i\in\{0,1,\ldots, n-1\}$ and $j\in\{0, 1\}$, with subscripts taken modulo $n$.

The automorphism group of $\GP(n,k)$ was determined by Frucht, Graver and Watkins (see~\cite[Theorems 1 and 2, p.217--218]{FGW1971}). We present their result in the following lemma, where the groups $F(n)$, $J(n,k)$ and $H(n,k)$ are as defined in \eqref{fn}, \eqref{jnk} and \eqref{hnk}, respectively. 

\begin{lemma}\label{A}
Let $n$ and $k$ be integers with $1 \leqslant k< n/2$. If $(n,k)\neq(4,1)$, $(5,2)$, $(8,3)$, $(10,2)$, $(10,3)$, $(12,5)$, $(24,5)$, then the following hold:
\begin{enumerate}[{\rm(i)}]
\item if $k^2\not\equiv\pm1\pmod{n}$, then $\Aut(\GP(n,k)) = F(n)$;
\item if $k^2\equiv1\pmod{n}$, then $\Aut(\GP(n,k)) = J(n,k)$;
\item if $k^2\equiv-1\pmod{n}$, then $\Aut(\GP(n,k)) = H(n,k)$.
\end{enumerate}
Moreover, the following hold:
\begin{enumerate}[{\rm(iv)}]
\item $\Aut(\GP(4,1))\cong \Sy_4\times \ZZ_2$; \smallskip
\item[{\rm(v)}] $\Aut(\GP(5,2))\cong \Sy_5$; \smallskip
\item[{\rm(vi)}] $\Aut(\GP(8,3))=X\cong\GL(2,3)\rtimes \ZZ_2$, where
\[
X=\langle \rho, \delta, \sigma\mid \rho^8=\delta^2=\sigma^3=1, \delta\rho\delta=\rho^{-1}, \delta\sigma\delta=\sigma^{-1}, \sigma\rho\sigma=\rho^{-1}, \sigma\rho^4=\rho^4\sigma\rangle;
\]
\item[{\rm(vii)}] $\Aut(\GP(10,2))\cong \A_5\times\ZZ_2$; \smallskip
\item[{\rm(viii)}] $\Aut(\GP(10,3))\cong \Sy_5\times \ZZ_2$; \smallskip
\item[{\rm(ix)}] $\Aut(\GP(12,5))=X\cong \Sy_4\times \Sy_3$, where
\[
X=\langle \rho, \delta, \sigma\mid \rho^{12}=\delta^2=\sigma^3=1, \delta\rho\delta=\rho^{-1}, \delta\sigma\delta=\sigma^{-1}, \sigma\rho\sigma=\rho^{-1}, \sigma\rho^4=\rho^4\sigma\rangle;
\]
\item[{\rm(x)}] $\Aut(\GP(24,5))=X\cong(\GL(2,3)\times\ZZ_3)\rtimes \ZZ_2$, where
\[
X=\langle \rho, \delta, \sigma\mid (\sigma\rho)^2=\delta^2=\sigma^3=1, \delta\rho\delta=\rho^{-1}, \delta\sigma\delta=\sigma^{-1}, \sigma\rho^4=\rho^4\sigma\rangle.
\]
\end{enumerate}
\end{lemma}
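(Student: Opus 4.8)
The plan is to follow the approach of Frucht, Graver and Watkins, reducing the determination of $\Aut(\GP(n,k))$ to understanding how an automorphism acts on the three natural classes of edges in \eqref{edge1}: the \emph{outer edges} $\{u_i,u_{i+1}\}$, the \emph{spokes} $\{u_i,v_i\}$, and the \emph{inner edges} $\{v_i,v_{i+k}\}$. First I would exhibit the obvious automorphisms to obtain a lower bound. The rotation $\rho\colon u_i\mapsto u_{i+1},\ v_i\mapsto v_{i+1}$ and the reflection $\delta\colon u_i\mapsto u_{-i},\ v_i\mapsto v_{-i}$ are automorphisms for all $(n,k)$ and generate a copy of $F(n)\cong\D_{2n}$. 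When $k^2\equiv\pm1\pmod n$ there is in addition an automorphism $\alpha\colon u_i\mapsto v_{ki},\ v_i\mapsto u_{ki}$ that interchanges the inner and outer vertices; a short check of the edges shows $\alpha$ is well defined, with $\alpha^2=1$ when $k^2\equiv1$ (giving $J(n,k)$) and $\alpha^2=\delta$, so that $\alpha$ has order $4$, when $k^2\equiv-1$ (giving $H(n,k)$). This yields the lower bounds in (i)--(iii).

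For the matching upper bound I would introduce the subgroup $B\le\Aut(\GP(n,k))$ of those automorphisms that preserve the partition $\{u_i\}\cup\{v_i\}$. Since the spokes are exactly the edges joining the two parts, any element of $B$ preserves the spoke set and restricts to a symmetry of the outer $n$-cycle; writing that symmetry as $i\mapsto\varepsilon i+c$ with $\varepsilon=\pm1$ and noting that such an element is then determined on the $v_i$ by the spokes, one sees at once that $B=\langle\rho,\delta\rangle=F(n)$. Thus the whole problem collapses to bounding the index $[\Aut(\GP(n,k)):B]$ and deciding exactly when a partition-swapping automorphism exists.

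The heart of the argument, and the step I expect to be the main obstacle, is to show that for all but finitely many $(n,k)$ the spoke set is invariant under every automorphism, so that $[\Aut(\GP(n,k)):B]\le2$. The strategy is to separate the three edge classes by a local invariant — for instance the girth of $\GP(n,k)$ together with the number of shortest cycles through a given edge, or the number of $8$-cycles through an edge — and thereby force an arbitrary automorphism to permute these classes as blocks, and in particular to preserve the set of spokes. Granting this, an automorphism either stabilizes each of the parts $\{u_i\}$ and $\{v_i\}$, landing in $B$, or interchanges them. In the interchanging case the outer $n$-cycle is carried onto the inner rim, which must therefore be a single $n$-cycle, forcing $\gcd(n,k)=1$; comparing the two labelings then shows the swap has the form $u_i\mapsto v_{ki+c}$ up to sign, and compatibility with the remaining edges forces $k^2\equiv\pm1\pmod n$. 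Conversely this congruence is precisely what makes $\alpha$ well defined. The delicate point is that the chosen local invariant fails to separate the edge classes for a short list of small parameters, and pinning down exactly when this degeneracy occurs is what produces the seven exceptional pairs.

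Finally I would treat the exceptional cases $(4,1),(5,2),(8,3),(10,2),(10,3),(12,5),(24,5)$ by hand. For these, extra coincidences among short cycles create additional automorphisms, so that $\Aut(\GP(n,k))$ is strictly larger than the generic answer; each can be identified by exhibiting explicit extra automorphisms and matching the resulting group against the list in (iv)--(x) — for example $\Aut(\GP(5,2))\cong\Sy_5$ for the Petersen graph — or by direct verification. Assembling these steps gives the isomorphism types $F(n)$, $J(n,k)$ and $H(n,k)$ in the generic cases (i)--(iii) together with the sporadic groups in (iv)--(x), completing the proof.
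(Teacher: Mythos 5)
The first thing to say is that the paper contains no proof of this statement at all: Lemma~\ref{A} is presented explicitly as a restatement of Theorems 1 and 2 of Frucht, Graver and Watkins \cite{FGW1971}, and the paper simply cites that reference. So there is no internal argument to compare against, and your proposal is in effect an attempt to reprove the FGW theorem from scratch. The skeleton you describe is indeed the FGW skeleton, and its elementary steps are correct: $\rho$ and $\delta$ generate a copy of $F(n)\cong\D_{2n}$ for every $(n,k)$; when $k^2\equiv\pm1\pmod{n}$ the map $\alpha\colon u_i\mapsto v_{ki},\ v_i\mapsto u_{ki}$ is an automorphism with $\alpha^2=1$ (giving $J(n,k)$) or $\alpha^2=\delta$ (giving $H(n,k)$); the subgroup $B$ of automorphisms stabilizing $\{u_i\}$ and $\{v_i\}$ setwise equals $\langle\rho,\delta\rangle$, since such an automorphism is determined by its restriction to the outer cycle via the spokes; and, granted that every automorphism preserves the spoke set, a part-swapping automorphism forces $\gcd(n,k)=1$, must have the form $u_i\mapsto v_{\pm ki+c}$, and is compatible with the inner edges exactly when $k^2\equiv\pm1\pmod{n}$.

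The genuine gap is at what you yourself flag as the main obstacle, and it is not a minor one: you never prove that, outside the seven exceptional pairs, every automorphism of $\GP(n,k)$ preserves the set of spokes (equivalently, permutes the three edge classes). Naming a candidate invariant --- girth, or the number of $8$-cycles through an edge --- is not a proof. To execute it one must enumerate all types of $8$-cycles in $\GP(n,k)$, compute, for each edge class and each type, how many such cycles pass through an edge as a function of $(n,k)$, and then determine exactly for which parameters the counts for distinct classes can coincide; it is precisely this enumeration and case analysis that occupies the bulk of the FGW paper (their Table~1, which the present paper itself borrows when it carries out the analogous counting for the double covers in Lemma~\ref{B=A}), and it is the only place the exceptional list $(4,1),(5,2),(8,3),(10,2),(10,3),(12,5),(24,5)$ can come from --- your proposal leaves this list unexplained. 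Likewise, parts (iv)--(x) are asserted rather than proved: ``exhibiting explicit extra automorphisms and matching the resulting group against the list'' is left entirely unexecuted, and identifying, say, $\Aut(\GP(8,3))\cong\GL(2,3)\rtimes\ZZ_2$ or $\Aut(\GP(24,5))\cong(\GL(2,3)\times\ZZ_3)\rtimes\ZZ_2$ is a nontrivial computation. As it stands, your text is an accurate roadmap of the known proof, but the two components carrying all of the difficulty are missing; the paper itself sidesteps both by citing \cite{FGW1971}, which is also the legitimate short route here.
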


%
%

The next lemma, as a special case of~\cite[Proposition~9]{BPZ2005}, gives all possible isomorphisms between generalized Petersen graphs.

\begin{lemma}\label{isomorphism}
Let $n$, $r$ and $s$ be integers with $1 \leqslant r< n/2$, $1 \leqslant s< n/2$ and $r\neq s$. Then $\GP(n,r)$ is isomorphic to $\GP(n,s)$ if and only if $rs\equiv\pm 1 \pmod{n}$.
\end{lemma}

\section{Isomorphisms among $\DGP(n,k)$, $\GP(2n,s)$ and $\DP(n,t)$}\label{DP}

First we give the isomorphisms between $\DGP(n,k)$ and $\GP(n,t)$ for odd $n$. Following~\cite{KP2016}, we call edges of $\DP(n,t)$ in
\begin{align*}
& \{\{x_i, x_{i+1}\}, \{y_i, y_{i+1}\} \mid i\in\{0,\ldots, n-1\}\},\\
& \{\{x_i, \overline{x}_i\}, \{y_i, \overline{y}_i\} \mid i\in\{0,\ldots, n-1\}\}
\end{align*}
and
\[
\{\{\overline{x}_i, \overline{y}_{i+t}\}, \{\overline{y}_i, \overline{x}_{i+t}\} \mid i\in\{0,\ldots, n-1\}\}
\]
the \emph{outer edges}, \emph{spokes} and \emph{inner edges} of $\DP(n,t)$, respectively. The first two parts of the following proposition can be found in~\cite[Proposition 12]{KP2018}, and the third part is true as $\DP(n,t)$ contains an $n$-cycle while the bipartite graphs $\DGP(n,k)$ does not.

\begin{proposition}\label{ncong1}
Let $n$, $k$ and $t$ be integers with $n$ odd, $1 \leqslant k< n/2$ and $1 \leqslant t< n/2$. Then the following hold:
\begin{enumerate}[{\rm (i)}]
\item if $k$ is odd, then $\DGP(n,k)\cong\GP(2n,k)$;
\item if $k$ is even, then $\DGP(n,k)\cong\GP(2n,n-k)$;
\item $\DGP(n,k)$ is not isomorphic to $\DP(n,t)$.
\end{enumerate}
\end{proposition}




For a positive integer $m$ and a graph $\Gamma$, denote by $m\Gamma$ the graph consisting of $m$ vertex-disjoint copies of $\Gamma$. Note that, for even $n$ and odd $k$, since $\GP(n,k)$ is bipartite (see, for example,~\cite[Proposition 4.3]{AL2009} or~\cite[Theorem 2]{BPZ2005}), the canonical double cover $\DGP(n,k)$ is isomorphic to $2\GP(n, k)$. Thus we have the following lemma.

\begin{lemma}\label{ncong2}
Let $n$ and $k$ be integers with $n$ even, $k$ odd and $1 \leqslant k< n/2$. Then $\DGP(n,k)\cong2\GP(n, k)$. In particular, $\DGP(n,k)$ is not isomorphic to $\DP(n,t)$ for any integer $t$ with $1 \leqslant t< n/2$. 
\end{lemma}


The next Lemma can be easily proved using Proposition~\ref{ncong1}(iii), Lemma~\ref{ncong2} and the observation that the mapping
\begin{align*}
&(u_i,0)\mapsto x_i,\quad(u_i,1)\mapsto y_i,\quad(v_i,1)\mapsto\overline{x}_i,\quad(v_i,0)\mapsto\overline{y}_i,\\
&(u_j,1)\mapsto x_j,\quad(u_j,0)\mapsto y_j,\quad(v_j,0)\mapsto\overline{x}_j,\quad(v_j,1)\mapsto\overline{y}_j
\end{align*}
for $i\in\{0,2,\ldots,n-2\}$ and $j\in\{1,3,\ldots,n-1\}$ gives an isomorphism from $\DGP(n,k)$ to $\DP(n,k)$.

\begin{lemma}\label{isomorphic}
Let $n$ and $k$ be integers with $1 \leqslant k< n/2$. Then $\DGP(n,k)\cong\DP(n,t)$ for some integer $t$ with $1 \leqslant t< n/2$ if and only if $n$ and $k$ are both even. Moreover, if $n$ and $k$ are both even, then $\DGP(n,k)\cong\DP(n,k)$.
\end{lemma}



\begin{lemma}\label{isomorphic2}
Let $n$, $k$ and $t$ be integers with $1 \leqslant k< n/2$ and $1 \leqslant t< n/2$. Then the following hold:
\begin{enumerate}[{\rm (i)}]
\item $\DP(n,t)$ is isomorphic to a generalized Petersen graph if and only if $n$ is odd and $\gcd(n,t)=1$;
\item if $n$ is odd and $\gcd(n,t)=1$, then $\DP(n,t)\cong \GP(2n,s)$, where $s$ is the unique even integer such that $1\leqslant s<n$ and $st\equiv\pm 1 \pmod{n}$.
\end{enumerate}
\end{lemma}

\begin{proof}
First assume that $\DP(n,t)\cong\GP(m,r)$ for some integers $m$ and $r$ with $1 \leqslant r<m/2$. Then $m=2n$ and $(u_0,u_1,\ldots,u_{2n-1})$ is a cycle in $\GP(m,r)$, and so there is a cycle $C$ of length $2n$ in $\DP(n,t)$ corresponding to $(u_0,u_1,\ldots,u_{2n-1})$. Clearly, the outer edges of $\DP(n,t)$ form two vertex-disjoint cycles of length $n$. It follows that $C$ either consists of inner edges only or consists of outer edges, spokes and inner edges. If the former occours, then $C$ is of the form $(\overline{x}_0, \overline{y}_t, \overline{x}_{2t}, \overline{y}_{2t}, \cdots, \overline{x}_{(n-2)t}, \overline{y}_{(n-1)t})$, and so $n$ is odd and $\gcd(n,t)=1$. Suppose that the latter occurs. Note that for any two edges in $(u_0,u_1,\ldots,u_{2n-1})$, there exists an element  in $\Aut(\GP(m,r))$ which maps one edge to the other. This implies that there exists $\pi\in\Aut(\DP(n,t))$ which maps some spoke to an edge that is not a spoke. Thereby we derive from~\cite[Lemma~3.6]{KP2016} that $\DP(n,t)$ is edge-transitive, and so $\GP(m,r)$ is edge-transitive. Then by \cite[p.~212]{FGW1971} we have $(m,r)=(4,1)$, $(5,2)$, $(8,3)$, $(10,2)$, $(10,3)$, $(12,5)$ or $(24,5)$. However, computation in \textsc{Magma}~\cite{magma} shows that $\GP(4,1)$, $\GP(5,2)$, $\GP(8,3)$, $\GP(12,5)$ and $\GP(24,5)$ are not isomorphic to any double cover of any generalized Petersen graph. Thus $m=10$ and $n=5$, whence $n$ is odd and $\gcd(n,t)=1$.

Conversely, assume that $n$ is odd and $\gcd(n,t)=1$. Then there exists an unique even integer $s$ such that $1\leqslant s<n$ and $st\equiv\pm1\pmod{n}$.
It can be verified that the mapping
\begin{align*}
&u_i\mapsto
\begin{cases}
\overline{x}_{it}\quad&\text{if $i$ is even and $i<n$}\\
\overline{y}_{it}\quad&\text{if $i$ is odd and $i<n$}\\
\overline{x}_{(i-n)t}\quad&\text{if $i$ is even and $i\geqslant n$}\\
\overline{y}_{(i-n)t}\quad&\text{if $i$ is odd and $i\geqslant n$}
\end{cases}\\
&v_i\mapsto
\begin{cases}
x_{it}\quad&\text{if $i$ is even and $i<n$}\\
y_{it}\quad&\text{if $i$ is odd and $i<n$}\\
x_{(i-n)t}\quad&\text{if $i$ is even and $i\geqslant n$}\\
y_{(i-n)t}\quad&\text{if $i$ is odd and $i\geqslant n$}
\end{cases}
\end{align*}
for $i\in\{0,\ldots, 2n-1\}$ defines an isomorphism from  $\GP(2n,s)$ to $\DP(n,t)$. Hence $\DP(n,t)\cong \GP(2n,s)$. This completes the proof of statements (i) and (ii).
\end{proof}

We conclude this section by proving Theorem~\ref{3isomorphism}.

\begin{proof}
By Proposition~\ref{ncong1}(i) and~\cite[Corollary 20]{KP2018} the statements in part~(i) hold. By Lemmas~\ref{isomorphic} and~\ref{isomorphic2} we obtain the statements in parts~(ii) and~(iii), respectively. The statements in part~(iv) of follows from the statements in parts~(i),(ii) and~(iii). 
\end{proof}

\section{Proofs of Theorem~\ref{A(n,k)} and Corollary \ref{thm:stab}}\label{stability}

In this section we determine $A(n,k)$ and the stability of generalized Petersen graphs.

\begin{proposition}\label{odd}
Let $n$ and $k$ be integers with $n$ odd and $1 \leqslant k< n/2$. Then $\GP(n,k)$ is stable and $A(n,k)$ is given as follows:
\begin{enumerate}[\rm (i)]
\item If $k$ is odd, then the following hold:
\begin{enumerate}
\item [\rm (i.1)]
if $k^2\not\equiv\pm1\pmod{n}$, then $A(n,k) = F(2n)$;
\item [\rm (i.2)]
if $k^2\equiv1\pmod{n}$, then $A(n,k) = J(2n, k)$;
\item [\rm (i.3)]
if $k^2\equiv-1\pmod{n}$, then $A(n,k) = H(2n, k)$.
\end{enumerate}
\item If $k$ is even and $(n,k) \ne (5, 2)$, then the following hold:
\begin{enumerate}
\item [\rm (ii.1)]
if $k^2\not\equiv\pm1\pmod{n}$, then $A(n,k) = F(2n)$;
\item [\rm (ii.2)]
if $k^2\equiv1\pmod{n}$, then $A(n,k) = J(2n, n-k)$;
\item [\rm (ii.3)]
if $k^2\equiv-1\pmod{n}$, then $A(n,k) = H(2n, n-k)$.
\end{enumerate}
In addition,
\begin{enumerate}
\item [\rm (ii.4)] $A(5,2)\cong \Sy_5\times\ZZ_2$.
\end{enumerate}
\end{enumerate}
\end{proposition}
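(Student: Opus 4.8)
The plan is to exhibit an explicit graph isomorphism from $\DGP(n,k)$ onto a generalized Petersen graph on $2n$ vertices, and then read off $A(n,k)$ from Lemma~\ref{A}. The essential point is that, since $n$ is odd, the Chinese Remainder Theorem gives $\ZZ_{2n}\cong\ZZ_2\times\ZZ_n$, so for each pair $(i,j)$ with $i\in\ZZ_n$ and $j\in\ZZ_2$ there is a unique $t\in\ZZ_{2n}$ satisfying $t\equiv i\pmod n$ and $t\equiv j\pmod 2$. Using this, I would label the target graph $\GP(2n,k')$ by symbols $U_0,\dots,U_{2n-1},V_0,\dots,V_{2n-1}$ and define the bijection by $U_t=(u_{\,t\bmod n},\,t\bmod 2)$ and $V_t=(v_{\,t\bmod n},\,(t+1)\bmod 2)$ for $t\in\{0,1,\dots,2n-1\}$. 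These give bijections between the outer (respectively inner) vertices of $\DGP(n,k)$ and the symbols $U_t$ (respectively $V_t$).

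Next I would verify, directly from the edge description~\eqref{edge2}, that this map sends each outer edge $\{(u_i,j),(u_{i+1},1-j)\}$ to an edge $\{U_t,U_{t+1}\}$ and each spoke $\{(u_i,j),(v_i,1-j)\}$ to $\{U_t,V_t\}$; here oddness of $n$ is exactly what makes the outer $n$-cycle lift to a single $2n$-cycle. A short computation then shows that an inner edge $\{(v_i,j),(v_{i+k},1-j)\}$ is carried to $\{V_s,V_{s+m}\}$, where $m$ is forced to satisfy $m\equiv k\pmod n$ and $m\equiv 1\pmod 2$. When $k$ is odd we may take $m=k$, giving $\DGP(n,k)\cong\GP(2n,k)$; when $k$ is even the parity constraint forces $m\equiv k+n\pmod{2n}$, and since $\GP(2n,m)\cong\GP(2n,2n-m)$ we obtain $\DGP(n,k)\cong\GP(2n,n-k)$. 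This is the step I expect to be the most delicate, since one must track the layer coordinate carefully and correctly pin down the inner parameter from the parity of $k$.

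With the isomorphism established, set $k'=k$ when $k$ is odd and $k'=n-k$ when $k$ is even; in both cases $k'$ is odd and $(k')^2\equiv k^2\pmod n$. I would then translate the hypotheses on $k^2\bmod n$ into the statements mod $2n$ required by Lemma~\ref{A} for $\GP(2n,k')$. Because $1$ and $-1$ are both odd and $(k')^2$ is odd, the Chinese Remainder Theorem shows that $k^2\equiv c\pmod n$ forces $(k')^2\equiv c\pmod{2n}$ for $c\in\{1,-1\}$, and that $k^2\not\equiv\pm1\pmod n$ forces $(k')^2\not\equiv\pm1\pmod{2n}$. Applying Lemma~\ref{A}(i)--(iii) to $\GP(2n,k')$ then identifies $A(n,k)$ with $F(2n)$, $J(2n,k')$, or $H(2n,k')$, exactly as claimed in (i.1)--(i.3) and (ii.1)--(ii.3). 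The only way $2n$ can hit an exceptional value of Lemma~\ref{A} for odd $n$ is $n=5$ with $k$ even, i.e.\ $(n,k)=(5,2)$; there $\DGP(5,2)\cong\GP(10,3)$, and Lemma~\ref{A}(viii) gives $A(5,2)\cong\Sy_5\times\ZZ_2$, yielding (ii.4).

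Finally, for stability I would use the general containment $\Aut(\GP(n,k))\times\ZZ_2\le A(n,k)$ recorded in the introduction, which reduces stability to the numerical identity $|A(n,k)|=2\,|\Aut(\GP(n,k))|$. Computing $|\Aut(\GP(n,k))|$ from Lemma~\ref{A} (with $|F(n)|=2n$ and $|H(n,k)|=|J(n,k)|=4n$) and comparing with the orders of $F(2n)$, $J(2n,k')$, $H(2n,k')$ found above, each subcase contributes a factor of exactly $2$, so $\GP(n,k)$ is stable throughout; the same comparison handles $(5,2)$ since $|\Sy_5\times\ZZ_2|=2\,|\Sy_5|$.
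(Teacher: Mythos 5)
Your proposal is correct and takes essentially the same approach as the paper: your CRT labeling is precisely the paper's explicit piecewise isomorphism $\DGP(n,k)\cong\GP(2n,k)$ (resp.\ $\GP(2n,n-k)$ for $k$ even), and the rest of the argument — transferring the congruences from modulo $n$ to modulo $2n$ via parity, applying Lemma~\ref{A} with $(n,k)=(5,2)$ as the lone exceptional case, and deducing stability from $|A(n,k)|=2\,|\Aut(\GP(n,k))|$ — coincides with the paper's proof.
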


\begin{proof}
First assume that $k$ is odd. Then by Proposition~\ref{ncong1}(i) we have $\DGP(n,k)\cong\GP(2n,k)$. If $k^2\not\equiv\pm1\pmod{n}$, then $k^2\not\equiv\pm1\pmod{2n}$, and hence we obtain from Lemma~\ref{A} that $\Aut(\GP(n,k)) = F(n) \cong \D_{2n}$ and $A(n,k) = F(2n) \cong \D_{4n}$. Thus $|A(n,k)|=2|\Aut(\GP(n,k))|$, which shows that $\GP(n,k)$ is stable. 

Similarly, we can prove other parts of the proposition using Proposition~\ref{ncong1} and Lemma~\ref{A}.
\end{proof}

\begin{proposition}\label{even}
Let $n$ and $k$ be even integers with $1 \leqslant k < n/2$. If $(n,k)\neq(10,2)$, then the following hold:
\begin{enumerate}[{\rm(i)}]
\item if $k^2\equiv1\pmod{n/2}$, then $A(n,k) = L(n,k)$;
\item if $k^2\equiv-1\pmod{n/2}$, then $A(n,k) = M(n,k)$;
\item if $n = 4k$, then $A(n,k) = N(n,k)$;
\item if $k^2 \not \equiv \pm 1 \pmod{n/2}$ and $n \ne 4k$, then $A(n,k) = K(n,k)$.
\end{enumerate}
Moreover, $A(10, 2)\cong (\A_5\times\ZZ_2^2)\rtimes\ZZ_2$
\end{proposition}

\begin{proof}
Since both $n$ and $k$ are even,  by Lemma~\ref{isomorphic} we have $\DGP(n,k)\cong\DP(n,k)$.

Assume $k^2\equiv1\pmod{n/2}$. From~\cite[Propositions~3.1,~3.8,~Corollary~3.11]{KP2016} and the proof of~\cite[Proposition~3.4]{KP2016} we see that
$A(n,k) = \langle \rho, \delta, \beta, \psi \rangle$ with $|A(n,k)|=8n$ and
\[
\delta\rho= \rho^{-1}\delta, \quad \beta\rho=\rho\beta, \quad \delta\beta=\beta\delta, \quad \psi\rho=\rho^k\beta\psi, \quad \psi\delta=\delta\psi, \quad \psi\beta=\rho^\frac{n}{2}\psi,
\]
where the generators $\rho$, $\beta$, $\delta$ and $\psi$ are the permutations $\alpha$, $\beta$, $\gamma$ and $\psi$ defined in~\cite[p.2863]{KP2016}, respectively. By the definition of these permutations it is easy to verify that $\rho^{n}=\delta^2=\beta^2=\psi^2=1$. Since $|L(n,k)|=8n=|A(n,k)|$, we then conclude that $A(n,k) = L(n,k)$, proving statement~(i).

Assume $k^2\equiv-1\pmod{n/2}$. From~\cite[Propositions~3.1,~3.8,~Corollary~3.11]{KP2016} and the proof of~\cite[Proposition~3.4]{KP2016} we see that
$A(n,k) = \langle \rho, \delta, \beta, \psi \rangle$ with $|A(n,k)|=8n$ and
\[
\delta\rho= \rho^{-1}\delta, \quad \beta\rho=\rho\beta, \quad \delta\beta=\beta\delta, \quad \psi\rho=\rho^k\beta\psi, \quad \psi\delta=\delta\psi, \quad \psi\beta=\rho^\frac{n}{2}\psi, \quad \psi^2=\delta,
\]
where the generators $\rho$, $\beta$, $\delta$ and $\psi$ are the permutations $\alpha$, $\beta$, $\gamma$ and $\psi$ defined in~\cite[p.2863]{KP2016}, respectively. By the definition of these permutations it is direct to verify that $\rho^{n}=\delta^2=\beta^2=\psi^4=1$. Since $|M(n,k)|=8n=|A(n,k)|$, it follows that $A(n,k) = M(n,k)$, as statement~(ii) asserts.

Assume $n = 4k$. Then from~\cite[Propositions~3.1,~3.8,~Corollary~3.11]{KP2016} and the proof of~\cite[Proposition~3.4]{KP2016} we see that
$A(n,k) = \langle \rho, \delta, \beta, \eta \rangle$ with $|A(n,k)|=8n$ and
\[
\delta\rho= \rho^{-1}\delta, \quad \beta\rho=\rho\beta, \quad \delta\beta=\beta\delta, \quad \eta\rho=\rho\eta, \quad \eta\delta=\delta\eta, \quad \eta\beta=\rho^{\frac{n}{2}}\beta\eta,
\]
where the generators $\rho$, $\beta$, $\delta$ and $\eta$ are the permutations $\alpha$, $\beta$, $\gamma$ and $\eta$ defined in~\cite[p.2863]{KP2016}, respectively. By the definition of these permutations it is easy to verify that $\rho^{n}=\delta^2=\beta^2=\eta^2=1$. As $|N(n,k)|=8n=|A(n,k)|$, we obtain that $A(n,k) = N(n,k)$, proving statement~(iii).

Now assume $k^2 \not \equiv \pm 1 \pmod{n/2}$ and $n \ne 4k$. From~\cite[Propositions~3.1,~3.8,~Corollary~3.11]{KP2016} and the proof of~\cite[Proposition~3.4]{KP2016} we see that
$A(n,k) = \langle \rho, \delta, \beta\rangle$ with $|A(n,k)|=4n$ and
\[
\delta\rho= \rho^{-1}\delta, \quad \beta\rho=\rho\beta, \quad \delta\beta=\beta\delta,
\]
where the generators $\rho$, $\beta$ and $\delta$ are the permutations $\alpha$, $\beta$ and $\gamma$ defined in~\cite[p.2863]{KP2016}, respectively. Moreover, it is readily seen from the definition of these permutations that $\rho^{n}=\delta^2=\beta^2=1$. Since $|K(n,k)|=4n=|A(n,k)|$, it follows that $A(n,k) = K(n,k)$, as statement~(iv) asserts.

Finally, computation in \textsc{Magma}~\cite{magma} shows that $A(10, 2)\cong (\A_5\times\ZZ_2^2)\rtimes\ZZ_2$. The proof is thus completed.
\end{proof}

We are now ready to prove Theorem~\ref{A(n,k)}:

\begin{proof}
If $n$ is odd, then Proposition~\ref{odd} shows that parts~(i) and~(ii) of Theorem~\ref{A(n,k)} hold. If $n$ is even and $k$ is odd, then by Lemma~\ref{ncong2} we have $\DGP(n,k)\cong2\GP(n, k)$, and hence $A(n,k)\cong\Aut(\GP(n,k))\wr\Sy_2$ (see~\cite{Sabidussi1959}), which together with Lemma~\ref{A} leads to part~(iii) of Theorem~\ref{A(n,k)}. If both $n$ and $k$ are even, then from Proposition~\ref{even} we obtain part~(iv) of Theorem~\ref{A(n,k)}. This completes the proof.
\end{proof}

We conclude the note by proving Corollary \ref{thm:stab}: 

\begin{proof}
If $n$ is odd, then according to Proposition~\ref{odd}, $\GP(n, k)$ is stable. Since $\GP(n,k)$ is connected and vertex-determining, it is trivially unstable if and only if it is bipartite. Recall that $\GP(n,k)$ is bipartite if and only if $n$ is even and $k$ is odd (see, for example,~\cite[Proposition 4.3]{AL2009} or~\cite[Theorem 2]{BPZ2005}). Thus $\GP(n,k)$ is trivially unstable if and only if $n$ is even and $k$ is odd. Now assume that both $n$ and $k$ are even. Then $k^2\not\equiv\pm1\pmod{n}$, and hence Lemma~\ref{A} implies that
\[
|\Aut(\GP(n,k))|=
\begin{cases}
2n\quad&\text{if $(n,k)\neq(10,2)$}\\
120\quad&\text{if $(n,k)=(10,2)$}.
\end{cases}
\]
Moreover, from Theorem~\ref{A(n,k)} we see that
\[
|A(n,k)|=
\begin{cases}
8n\quad&\text{if $(n,k)\neq(10,2)$ and either $k^2 \equiv\pm1\pmod{n/2}$ or $n = 4k$}\\
4n\quad&\text{if $k^2 \not \equiv\pm1\pmod{n/2}$ and $n \neq 4k$}\\
480\quad&\text{if $(n,k)=(10,2)$}.
\end{cases}
\]
Note that $(n,k)=(10,2)$ satisfies $k^2 \equiv -1 \pmod{n/2}$. It follows that $|A(n,k)|\neq2|\Aut(\GP(n,k))|$ if and only if $k^2 \equiv\pm1\pmod{n/2}$ or $n = 4k$. This shows that $\GP(n, k)$ is unstable if and only if $k^2 \equiv\pm1\pmod{n/2}$ or $n = 4k$, as desired.
\end{proof}

{\noindent\textsc{Acknowledgements.}} We would like to thank the anonymous referees for their valuable comments. The first author was supported by the Fundamental Research Funds for Beijing Universities allocated to Capital University of Economics and Business (XRZ2020058). This work was done during a visit of the first author to The University of Melbourne. She would like to thank Beijing Jiaotong University for supporting this visit and National Natural Science Foundation of China (11671030) for its financial support during her PhD program. She is very grateful to Professor Jin-Xin Zhou for suggesting the research topic.

\end{document}